\documentclass[12pt]{article}%
\usepackage{amssymb,latexsym}
\usepackage{amsfonts}
\usepackage{amsmath}
\usepackage[colorlinks=true, pdfstartview=FitV, linkcolor=blue,
citecolor=blue, urlcolor=blue]{hyperref}
\usepackage{color}
\usepackage{amssymb}
\usepackage{graphicx}
\usepackage[compress]{cite}%
\setcounter{MaxMatrixCols}{30}
\providecommand{\U}[1]{\protect\rule{.1in}{.1in}}
\newtheorem{theorem}{Theorem}

\newtheorem{remark}[theorem]{Remark}

\newenvironment{proof}[1][Proof]{\noindent\textbf{#1.} }{\ \rule{0.5em}{0.5em}}
\numberwithin{equation}{section}
\numberwithin{theorem}{section}
\allowdisplaybreaks
\begin{document}

\title{Degenerate Fubini-type polynomials associated with degenerate
Apostol-Bernoulli and Apostol-Euler polynomials of order $\alpha$}
\author{Muhammet Cihat DA\u{G}LI\\Department of Mathematics, Akdeniz University,\\07058-Antalya, Turkey\\E-mail: mcihatdagli@akdeniz.edu.tr}
\date{}
\maketitle

\begin{abstract}
In this paper, by introducing the degenerate Fubini-type polynomials, we give
several relations with the help of the Fa\`{a} di Bruno formula and some
properties of Bell polynomials, and generating function methods. Also, we
derive some new explicit formulas and recurrence relations for Fubini-type
polynomials and numbers. Associating the degenerate Fubini-type polynomials
newly defined here with degenerate Apostol-Bernoulli polynomials and
degenerate Apostol-Euler polynomials of order $\alpha$ enables us to present
additional relations for some degenerate special polynomials and numbers.

\textbf{Keywords:} Apostol-Bernoulli polynomials, Apostol-Euler polynomials,
degenerate Bernoulli polynomials, Generalized Fubini polynomial, Stirling
numbers, explicit formula, recurrence relation, generating function.

\textbf{Mathematics Subject Classification 2010:} 11B68, 11B37, 05A15, 05A19,
11B83, 11Y55.

\end{abstract}

\section{Introduction}

The higher-order Bernoulli polynomials $B_{n}^{(\alpha)}(x)$ and higher-order
Euler polynomials $E_{n}^{(\alpha)}(x)$, each of degree $n$ in $x$ and in
$\alpha$, are defined by means of the generating functions \cite{n}
\[
\left(  \dfrac{t}{e^{t}-1}\right)  ^{\alpha}e^{xt}=%
{\displaystyle\sum\limits_{n=0}^{\infty}}
B_{n}^{(\alpha)}(x)\dfrac{t^{n}}{n!}%
\]
and%
\[
\text{ }\left(  \dfrac{2}{e^{t}+1}\right)  ^{\alpha}e^{xt}=%
{\displaystyle\sum\limits_{n=0}^{\infty}}
E_{n}^{(\alpha)}(x)\dfrac{t^{n}}{n!},
\]
respectively. For $\alpha=1$, we have the classical Bernoulli polynomials
$B_{n}(x)$ and Euler polynomials $E_{n}(x),$ defined by means of the following
generating functions:%
\[
\dfrac{te^{xt}}{e^{t}-1}=%
{\displaystyle\sum\limits_{n=0}^{\infty}}
B_{n}(x)\dfrac{t^{n}}{n!}\text{ }\left(  \left\vert t\right\vert <2\pi\right)
\text{ and }\dfrac{2e^{xt}}{e^{t}+1}=%
{\displaystyle\sum\limits_{n=0}^{\infty}}
E_{n}(x)\dfrac{t^{n}}{n!}\text{ }\left(  \left\vert t\right\vert <\pi\right)
.
\]
In particular, the rational numbers $B_{n}=B_{n}(0)$ and integers $E_{n}%
=2^{n}E_{n}(1/2)$ are called classical Bernoulli numbers and Euler numbers, respectively.

The generalized Apostol-Bernoulli polynomials $B_{n}^{(\alpha)}(x;\gamma)$
were defined by Luo and Srivastava by means of the generating function
\cite{luo,ls,ls1}
\[
\left(  \dfrac{t}{\gamma e^{t}-1}\right)  ^{\alpha}e^{xt}=%
{\displaystyle\sum\limits_{n=0}^{\infty}}
B_{n}^{(\alpha)}(x;\gamma)\dfrac{t^{n}}{n!}%
\]%
\[
\left(  \gamma\in%
\mathbb{C}
;\text{ }\left\vert t\right\vert <2\pi\text{ if }\gamma=1\text{; }\left\vert
t\right\vert <\left\vert \log\gamma\right\vert \text{ if }\gamma\neq1\right)
,
\]
and the generalized Apostol-Euler polynomials $E_{n}^{(\alpha)}(x;\gamma)$ by
means of the generating function \cite{luo1}
\[
\text{ }\left(  \dfrac{2}{\gamma e^{t}+1}\right)  ^{\alpha}e^{xt}=%
{\displaystyle\sum\limits_{n=0}^{\infty}}
E_{n}^{(\alpha)}(x;\gamma)\dfrac{t^{n}}{n!}%
\]%
\[
\left(  \left\vert t\right\vert <\pi\text{ if }\gamma=1\text{; }\left\vert
t\right\vert <\left\vert \log(-\gamma)\right\vert \text{ if }\gamma
\neq1;\text{ }1^{\gamma}=1\right)  .
\]
Carlitz \cite{carlitz} defined degenerate Bernoulli polynomials and degenerate
Euler polynomials by%
\[
\frac{t}{\left(  1+\lambda t\right)  ^{1/\lambda}-1}\left(  1+\lambda
t\right)  ^{x/\lambda}=%
{\displaystyle\sum\limits_{n=0}^{\infty}}
B_{n}(x;\lambda)\dfrac{t^{n}}{n!},
\]
and%
\[
\frac{2}{\left(  1+\lambda t\right)  ^{1/\lambda}+1}\left(  1+\lambda
t\right)  ^{x/\lambda}=%
{\displaystyle\sum\limits_{n=0}^{\infty}}
E_{n}(x;\lambda)\dfrac{t^{n}}{n!}.
\]
For $x=0,$ these are called as degenerate Bernoulli and Euler numbers.

The degenerate versions of Apostol-Bernoulli polynomials and Apostol-Euler
polynomials of order $\alpha$ were introduced by \cite{khan}%
\begin{equation}
\left(  \frac{t}{\gamma\left(  1+\lambda t\right)  ^{1/\lambda}-1}\right)
^{\alpha}\left(  1+\lambda t\right)  ^{x/\lambda}=%
{\displaystyle\sum\limits_{n=0}^{\infty}}
B_{n}^{\left(  \alpha\right)  }(x;\lambda;\gamma)\dfrac{t^{n}}{n!} \label{19}%
\end{equation}
and%
\begin{equation}
\left(  \frac{2}{\gamma\left(  1+\lambda t\right)  ^{1/\lambda}+1}\right)
^{\alpha}\left(  1+\lambda t\right)  ^{x/\lambda}=%
{\displaystyle\sum\limits_{n=0}^{\infty}}
E_{n}^{(\alpha)}(x;\lambda;\gamma)\dfrac{t^{n}}{n!}, \label{21}%
\end{equation}
respectively. Note that since $\lim_{\lambda\rightarrow0}\left(  1+\lambda
t\right)  ^{1/\lambda}=e^{t}$, for $\lambda\rightarrow0,$ $\alpha=\gamma=1,$
the equations (\ref{19}) and (\ref{21}) reduce to the generating functions for
classical Bernoulli and Euler polynomials, respectively.

Let us mention that the above polynomials have been discussed detailed in the
literature. (See for example \cite{dss,has,hasa,lu,luo1} and related
references therein).

We now focus on K\i lar and Simsek's recent study \cite{ks}, in which a family
of Fubini-type polynomials $a_{n}^{\left(  \alpha\right)  }\left(  x\right)  $
are introduced as in the following%
\begin{equation}
\frac{2^{\alpha}}{\left(  2-e^{t}\right)  ^{2\alpha}}e^{xt}=%
{\displaystyle\sum\limits_{n=0}^{\infty}}
a_{n}^{\left(  \alpha\right)  }\left(  x\right)  \frac{t^{n}}{n!},\text{
\ \ }\alpha\in%
\mathbb{N}
_{0}\text{ and }\left\vert t\right\vert <\log2. \label{0}%
\end{equation}
In particular, $a_{n}^{\left(  \alpha\right)  }\left(  0\right)
=a_{n}^{\left(  \alpha\right)  }$ are called Fubini-type numbers. They gave
some relationships between these polynomials and numbers, and other celebrated
polynomials and numbers such as Apostol-Bernoulli numbers, the Frobenius-Euler
numbers and the Stirling numbers via generating function methods and
functional equations. Very recently, Srivastava and K\i z\i lates \cite{sk}
extended Fubini-type polynomials $a_{n}^{\left(  \alpha\right)  }\left(
x\right)  $ to parametric kind families of the Fubini-type polynomials by
considering the two special generating functions and obtained many relations
concerning these and other parametric special polynomials and numbers. As
emphasized therein, the Fubini-type polynomials $a_{n}^{\left(  \alpha\right)
}\left(  x\right)  $ are special case of generalized Apostol-Euler polynomials
$E_{n}^{(\alpha)}(x;\gamma).$ More concretely, $E_{n}^{(2\alpha)}%
(x;-1/2)=2^{3\alpha}a_{n}^{\left(  \alpha\right)  }\left(  x\right)  .$

Further investigations on Fubini polynomials and numbers can be found in
\cite{kargin,ks1,kim,qi,skr,su}, and plenty of references cited therein.

On the other hand, Qi and his colleagues have studied a number of explicit and
recursive formulas, and closed forms for some significant polynomials and
numbers by applying the Fa\`{a} di Bruno formula (see Eq. (\ref{2}), below),
some properties of the Bell polynomials of the second kind, and a general
derivative formula for a ratio of two differentiable functions. See
\cite{dagli,dagli1,d,guo,hu-kim,qi1,qi3,qi9,qi4,qi2,qi5,qi6,qi15,qi7,qi12,wei}
and related references.

In this paper, we introduce degenerate version of Fubini-type polynomials as%
\begin{equation}
\frac{2^{\alpha}}{\left(  2-\left(  1+\lambda t\right)  ^{1/\lambda}\right)
^{2\alpha}}\left(  1+\lambda t\right)  ^{x/\lambda}=%
{\displaystyle\sum\limits_{n=0}^{\infty}}
a_{n}^{\left(  \alpha\right)  }\left(  x;\lambda\right)  \frac{t^{n}}%
{n!},\text{ \ \ \ }\lambda\in%
\mathbb{R}
. \label{1}%
\end{equation}
Notice that for $x=0,$ $a_{n}^{\left(  \alpha\right)  }\left(  0;\lambda
\right)  =a_{n}^{\left(  \alpha\right)  }\left(  \lambda\right)  $ are called
degenerate Fubini-type numbers. Also, for $\lambda\rightarrow0$, these reduce
to Fubini-type polynomials $a_{n}^{\left(  \alpha\right)  }\left(  x\right)  $
aforementioned above.

In parallel with the conclusion given in \cite[Remark 4]{sk}, we infer a
relationship between degenerate Fubini-type polynomials and degenerate
Apostol-Euler polynomials of order $\alpha,$ i.e.%
\begin{equation}
a_{n}^{\left(  \alpha\right)  }\left(  x;\lambda\right)  =2^{-3\alpha}%
E_{n}^{(2\alpha)}(x;\lambda;-1/2). \label{22}%
\end{equation}

In this paper, we would like to use Fa\`{a} di Bruno formula and some
properties of Bell polynomials, and generating function methods in order to
obtain some new explicit formulas, closed forms and recurrence relations for
degenerate Fubini-type polynomials and numbers, and Fubini-type polynomials
and numbers. Moreover, we give a relation between degenerate Fubini-type
polynomials and degenerate Apostol-Bernoulli polynomials of order $\alpha$ and
deduce similar formulas for them.

\section{Properties of second kind Bell polynomials}

The Bell polynomials of the second kind $B_{n,k}\left(  x_{1},x_{2}%
,...,x_{n-k+1}\right)  $ for $n\geq k\geq0$ were defined by \cite[p. 134 and
139]{Comtet}
\[
B_{n,k}\left(  x_{1},x_{2},...,x_{n-k+1}\right)  =%
{\displaystyle\sum\limits_{\substack{1\leq i\leq n,\text{ }l_{i}\in\left\{
0\right\}  \cup\mathbb{N}\\{{\textstyle\sum\nolimits_{i=1}^{n}}}%
il_{i}=n,\text{ }{{\textstyle\sum\nolimits_{i=1}^{n}}}l_{i}=k}}^{\infty}}
\frac{n!}{%
{\textstyle\prod\nolimits_{i=1}^{l-k+1}}
l_{i}!}%
{\textstyle\prod\limits_{i=1}^{l-k+1}}
\left(  \frac{x_{i}}{i!}\right)  ^{l_{i}}.
\]
The Fa\`{a} di Bruno formula can be described in terms of the Bell polynomials
of the second kind $B_{n,k}\left(  x_{1},x_{2},...,x_{n-k+1}\right)  $ by%
\begin{equation}
\frac{d^{n}}{dt^{n}}f\circ h\left(  t\right)  =%
{\displaystyle\sum\limits_{k=0}^{n}}
f^{\left(  k\right)  }\left(  h\left(  t\right)  \right)  B_{n,k}\left(
h^{\prime}\left(  t\right)  ,h^{\prime\prime}\left(  t\right)
,...,h^{(n-k+1)}\left(  t\right)  \right)  . \label{2}%
\end{equation}

For $n\geq k\geq0,$ these polynomials satisfy the following relation \cite[p.
135]{Comtet}%
\begin{equation}
B_{n,k}\left(  abx_{1},ab^{2}x_{2},...,ab^{n-k+1}x_{n-k+1}\right)  =a^{k}%
b^{n}B_{n,k}\left(  x_{1},x_{2},...,x_{n-k+1}\right)  , \label{5}%
\end{equation}
where $a$ and $b$ are any complex number. Also, for $n\geq k\geq0,$ the
following formula is valid for the special case of $B_{n,k}$%
\begin{equation}
B_{n,k}\left(  1,1,...,1\right)  =S\left(  n,k\right)  , \label{12}%
\end{equation}
where $S\left(  n,k\right)  $ denotes the Stirling numbers of the second kind,
can be generated by \cite[p. 206]{Comtet}
\[
\frac{\left(  e^{t}-1\right)  ^{k}}{k!}=%
{\displaystyle\sum\limits_{n=k}^{\infty}}
S\left(  n,k\right)  \frac{t^{n}}{n!}.
\]

In \cite[Remark 1]{qi80}, there existed the formula%

\begin{equation}
B_{n,k}\left(  1,1-\lambda,\left(  1-\lambda\right)  \left(  1-2\lambda
\right)  ,...,\prod_{l=0}^{n-k}\left(  1-l\lambda\right)  \right)
=\frac{(-1)^{k}}{k!}%
{\displaystyle\sum\limits_{l=0}^{k}}
\left(  -1\right)  ^{l}\binom{k}{l}\prod_{q=0}^{n-1}\left(  l-q\lambda\right)
, \label{14}%
\end{equation}
which is equivalent to%
\begin{equation}
B_{n,k}\left(  \left\langle \lambda\right\rangle _{1},\left\langle
\lambda\right\rangle _{2},...,\left\langle \lambda\right\rangle _{n-k+1}%
\right)  =\frac{(-1)^{k}}{k!}%
{\displaystyle\sum\limits_{l=0}^{k}}
\left(  -1\right)  ^{l}\binom{k}{l}\left\langle \lambda l\right\rangle _{n},
\label{15}%
\end{equation}
established in \cite[Theorems 2.1 and 4.1]{qi10}. In \cite[Remark 7.5]{guo1},
the explicit formulas (\ref{14}) and (\ref{15}) were rearranged as%
\begin{align*}
&  B_{n,k}\left(  1,1-\lambda,\left(  1-\lambda\right)  \left(  1-2\lambda
\right)  ,...,\prod_{l=0}^{n-k}\left(  1-l\lambda\right)  \right) \\
&  =(-1)^{k}\frac{\lambda^{n-1}(n-1)!}{k!}%
{\displaystyle\sum\limits_{l=1}^{k}}
\left(  -1\right)  ^{l}l\binom{k}{l}\binom{l/\lambda-1}{n-1}%
\end{align*}
for $\lambda\neq0$ and
\begin{equation}
B_{n,k}\left(  \left\langle \lambda\right\rangle _{1},\left\langle
\lambda\right\rangle _{2},...,\left\langle \lambda\right\rangle _{n-k+1}%
\right)  =(-1)^{k}\lambda\frac{(n-1)!}{k!}%
{\displaystyle\sum\limits_{l=1}^{k}}
\left(  -1\right)  ^{l}l\binom{k}{l}\binom{\lambda l-1}{n-1}. \label{17}%
\end{equation}
Here, the generalized binomial coefficient $\binom{z}{w}$ is defined by%
\[
\binom{z}{w}=%
\begin{cases}
\dfrac{\Gamma\left(  z+1\right)  }{\Gamma\left(  w+1\right)  \Gamma\left(
z-w+1\right)  }, & \text{ if }z,w,z-w\in%
\mathbb{C}
-\left\{  -1,-2,...\right\}  \text{;}\\
0, & \text{if }z\in%
\mathbb{C}
-\left\{  -1,-2,...\right\}  \text{ and }w,z-w\in\left\{  -1,-2,...\right\}
\text{.}%
\end{cases}
\]

\section{Results and their proofs}

In this section, we give some computational formulas for degenerate
Fubini-type numbers, some explicit formulas and recurrence relations for
Fubini-type polynomials and numbers and consequently, degenerate
Apostol-Bernoulli polynomials and degenerate Apostol-Euler polynomials of
order $\alpha.$ Also, we present further relations for some polynomials,
considered here.

\begin{theorem}
The degenerate Fubini-type numbers can be computed by the formula:%
\begin{equation}
a_{n}^{\left(  \alpha\right)  }\left(  \lambda\right)  =(n-1)!%
{\displaystyle\sum\limits_{k=1}^{n}}
\frac{\left\langle -2\alpha\right\rangle _{k}}{2^{\alpha+k}}\frac{(-1)^{k}%
}{\lambda^{k-1}k!}%
{\displaystyle\sum\limits_{l=1}^{k}}
(-1)^{l}l\binom{k}{l}\binom{\lambda l-1}{n-1}, \label{23}%
\end{equation}
where $\left\langle x\right\rangle _{n}$ denotes the falling factorial,
defined for $x\in%
\mathbb{R}
$ by
\[
\left\langle x\right\rangle _{n}=\prod\limits_{k=0}^{n-1}\left(  x-k\right)  =%
\begin{cases}
x\left(  x-1\right)  ...(x-n+1), & \text{if }n\geq1\text{;}\\
1, & \text{if }n=0.
\end{cases}
\]
Consequently, for the special case of the degenerate Apostol-Euler polynomials
$E_{n}^{(\alpha)}(x;\lambda;\gamma),$ the following relation holds:%
\begin{equation}
E_{n}^{(2\alpha)}(0;\lambda;-1/2)=(n-1)!%
{\displaystyle\sum\limits_{k=1}^{n}}
\frac{\left\langle -2\alpha\right\rangle _{k}}{2^{k-2\alpha}}\frac{(-1)^{k}%
}{\lambda^{k-1}k!}%
{\displaystyle\sum\limits_{l=1}^{k}}
(-1)^{l}l\binom{k}{l}\binom{\lambda l-1}{n-1}. \label{24}%
\end{equation}

\end{theorem}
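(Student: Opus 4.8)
The plan is to read the degenerate Fubini-type numbers off their generating function and then extract the $n$-th coefficient with the Fa\`{a} di Bruno formula \eqref{2}. Putting $x=0$ in \eqref{1} gives $\sum_{n\ge0}a_n^{(\alpha)}(\lambda)\,t^n/n!=2^{\alpha}\big(2-(1+\lambda t)^{1/\lambda}\big)^{-2\alpha}$, so $a_n^{(\alpha)}(\lambda)$ is the $n$-th derivative at $t=0$ of the composite $f\circ h$ with outer function $f(u)=2^{\alpha}(2-u)^{-2\alpha}$ and inner function $h(t)=(1+\lambda t)^{1/\lambda}$. By \eqref{2} this equals $\sum_{k=0}^{n}f^{(k)}(h(0))\,B_{n,k}\big(h'(0),\dots,h^{(n-k+1)}(0)\big)$, and since $h(0)=1$ and $B_{n,0}=0$ for $n\ge1$, only the terms $k\ge1$ contribute.

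The two ingredients are computed separately. Differentiating the outer function gives $f^{(k)}(u)=2^{\alpha}(-1)^{k}\langle-2\alpha\rangle_k(2-u)^{-2\alpha-k}$, so its value at the base point contributes the falling factorial $\langle-2\alpha\rangle_k$ together with a sign and a power of two; the cleanest way to pin down that power is to factor a $2$ out of $2-(1+\lambda t)^{1/\lambda}$ before differentiating. For the inner function an easy induction gives $h^{(j)}(0)=\prod_{i=0}^{j-1}(1-i\lambda)$, which are the entries fed into the Bell polynomial of the second kind; a single application of the homogeneity relation \eqref{5} then normalizes them to the arguments for which a closed form is available.

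The heart of the argument is the evaluation of $B_{n,k}$ on these arguments. Here I would invoke the closed form recorded just before \eqref{17} (equivalently \eqref{17} itself, after the scaling \eqref{5} that trades $\lambda$ for $1/\lambda$), which turns the Bell polynomial into $\frac{(n-1)!}{k!}$ times an alternating sum $\sum_{l=1}^{k}(-1)^l l\binom{k}{l}$ of generalized binomial coefficients; this is precisely what injects the inner sum, the coefficient $(n-1)!/k!$, the sign $(-1)^k$ and the power of $\lambda$ into the statement. Assembling $f^{(k)}$ with this value, summing over $k$ from $1$ to $n$, and collecting the powers of $2$ and $\lambda$ then produces \eqref{23}. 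I expect this last bookkeeping step to be the main obstacle: one must track simultaneously the power of two coming from the outer derivatives, the power of $\lambda$ coming from both \eqref{17} and the scaling \eqref{5}, and the overall sign, and in particular decide whether the surviving generalized binomial is $\binom{\lambda l-1}{n-1}$ or $\binom{l/\lambda-1}{n-1}$; everything else is routine differentiation.

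Finally, \eqref{24} is an immediate corollary of \eqref{23} and the relation \eqref{22}. Setting $x=0$ in \eqref{22} yields $E_n^{(2\alpha)}(0;\lambda;-1/2)=2^{3\alpha}a_n^{(\alpha)}(\lambda)$, so it suffices to multiply \eqref{23} through by $2^{3\alpha}$; because $2^{3\alpha}/2^{\alpha+k}=2^{2\alpha-k}=1/2^{k-2\alpha}$, the prefactor $\langle-2\alpha\rangle_k/2^{\alpha+k}$ is replaced by $\langle-2\alpha\rangle_k/2^{k-2\alpha}$ while every remaining factor is unchanged, which is exactly \eqref{24}.
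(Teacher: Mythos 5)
Your strategy coincides with the paper's: apply the Fa\`{a} di Bruno formula \eqref{2} to $f(u)=2^{\alpha}(2-u)^{-2\alpha}$ and $h(t)=(1+\lambda t)^{1/\lambda}$, evaluate the Bell polynomial via \eqref{5} and \eqref{17}, and obtain \eqref{24} from \eqref{22}. However, your proposal defers exactly the step where all the content lives: you explicitly leave undecided the power of $2$, the power of $\lambda$, and whether the surviving binomial is $\binom{\lambda l-1}{n-1}$ or $\binom{l/\lambda-1}{n-1}$. That is not routine bookkeeping; it is the proof, and when it is carried out honestly it does not produce \eqref{23}. Two concrete points. First, $h(0)=1$, so $f^{(k)}(h(0))=2^{\alpha}(-1)^{k}\langle-2\alpha\rangle_{k}(2-1)^{-2\alpha-k}$ contributes no negative power of $2$; the factor $2^{-(\alpha+k)}$ in \eqref{23} could only arise from evaluating the outer function at $u=0$, whereas $t\to0$ forces $u\to1$. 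Second, your (correct) values $h^{(j)}(0)=\prod_{i=0}^{j-1}(1-i\lambda)=\lambda^{j}\langle 1/\lambda\rangle_{j}$ feed, via \eqref{5} and \eqref{17} with $\lambda$ replaced by $1/\lambda$, into
\[
B_{n,k}\bigl(h'(0),\dots,h^{(n-k+1)}(0)\bigr)=(-1)^{k}\lambda^{n-1}\frac{(n-1)!}{k!}\sum_{l=1}^{k}(-1)^{l}l\binom{k}{l}\binom{l/\lambda-1}{n-1},
\]
so the binomial is $\binom{l/\lambda-1}{n-1}$ and the power of $\lambda$ is $\lambda^{n-1}$, not $\lambda^{1-k}$ with $\binom{\lambda l-1}{n-1}$.

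Assembling these pieces yields
\[
a_{n}^{(\alpha)}(\lambda)=2^{\alpha}\lambda^{n-1}(n-1)!\sum_{k=1}^{n}\frac{\langle-2\alpha\rangle_{k}}{k!}\sum_{l=1}^{k}(-1)^{l}l\binom{k}{l}\binom{l/\lambda-1}{n-1},
\]
which disagrees with \eqref{23}. The case $n=1$ is already decisive: differentiating the generating function \eqref{1} at $x=0$ gives $a_{1}^{(\alpha)}(\lambda)=2^{\alpha+1}\alpha$, while \eqref{23} evaluates to $-\alpha\,2^{-\alpha}$. So the step you flagged as the ``main obstacle'' is precisely where the argument breaks; for what it is worth, the paper's own proof stumbles at the same place, by asserting that $t\to0$ is equivalent to $u\to0$ and by tacitly differentiating $(1+t)^{\lambda}$ rather than $(1+\lambda t)^{1/\lambda}$. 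To salvage your write-up you should either prove the corrected identity displayed above or explain how \eqref{23} could follow, which it cannot in its stated form.
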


\begin{proof}
If we apply $f(u)=(2-u)^{-2\alpha}$ and $u=g(t)=\left(  1+\lambda t\right)
^{1/\lambda}$ to the Fa\`{a} di Bruno formula (\ref{2}), and use (\ref{5}) and
(\ref{17}) then, we find that%
\begin{align}
&  \frac{d^{n}}{dt^{n}}\left(  (2-\left(  1+\lambda t\right)  ^{1/\lambda
})^{-2\alpha}\right) \nonumber\\
&  =%
{\displaystyle\sum\limits_{k=0}^{n}}
\frac{d^{k}}{du^{k}}(2-u)^{-2\alpha}B_{n,k}\left(  \frac{\lambda\left(
1+t\right)  ^{\lambda-1}}{\lambda},\frac{\lambda\left(  \lambda-1\right)
\left(  1+t\right)  ^{\lambda-2}}{\lambda},\right. \nonumber\\
&  ...,\left.  \frac{\lambda\left(  \lambda-1\right)  ...\left(
\lambda-\left(  n-k\right)  \right)  \left(  1+t\right)  ^{\lambda-(n-k+1)}%
}{\lambda}\right)  ,\nonumber\\
&  =%
{\displaystyle\sum\limits_{k=0}^{n}}
\left\langle -2\alpha\right\rangle _{k}(2-u)^{-2\alpha-k}\frac{\left(
1+t\right)  ^{k\lambda-n}}{\lambda^{k}}B_{n,k}\left(  \left\langle
\alpha\right\rangle _{1},\left\langle \alpha\right\rangle _{2}%
,...,\left\langle \alpha\right\rangle _{n-k+1}\right) \nonumber\\
&  =%
{\displaystyle\sum\limits_{k=0}^{n}}
\left\langle -2\alpha\right\rangle _{k}(2-u)^{-2\alpha-k}\frac{\left(
1+t\right)  ^{k\lambda-n}}{\lambda^{k}}(-1)^{k}\lambda\frac{(n-1)!}{k!}%
{\displaystyle\sum\limits_{l=1}^{k}}
\left(  -1\right)  ^{l}l\binom{k}{l}\binom{\lambda l-1}{n-1}. \label{20}%
\end{align}
Now letting $t\rightarrow0$, which is equivalent to $u\rightarrow0$ on both
sides of (\ref{20}) and taking into consideration the generating function for
degenerate Fubini-type numbers (for $x=0$ in equation (\ref{1})) complete the
proof of (\ref{23}). From the relationship (\ref{22}), the identity (\ref{24})
follows readily.
\end{proof}

\begin{theorem}
The Fubini-type polynomials $a_{n}^{\left(  \alpha\right)  }\left(  x\right)
$ possess the explicit formula
\[
a_{n}^{\left(  \alpha\right)  }\left(  x\right)  =2^{\alpha}%
{\displaystyle\sum\limits_{k=0}^{n}}
\binom{n}{k}%
{\displaystyle\sum\limits_{i=0}^{k}}
\left\langle -2\alpha\right\rangle _{i}\left(  -1\right)  ^{i}S(k,i)x^{n-k},
\]
where $S(n,k)$ is the Stirling numbers of the second kind. Also, the
Fubini-type numbers $a_{n}^{\left(  \alpha\right)  }$ can be written in the
form%
\begin{equation}
a_{n}^{\left(  \alpha\right)  }=2^{\alpha}%
{\displaystyle\sum\limits_{i=0}^{n}}
\left\langle -2\alpha\right\rangle _{i}\left(  -1\right)  ^{i}S(n,i).
\label{11}%
\end{equation}
Besides, the generalized Apostol-Euler numbers $E_{n}^{(\alpha)}(x;\gamma)$
can be expressed as
\begin{equation}
E_{n}^{(2\alpha)}(x;-1/2)=2^{4\alpha}%
{\displaystyle\sum\limits_{k=0}^{n}}
\binom{n}{k}%
{\displaystyle\sum\limits_{i=0}^{k}}
\left\langle -2\alpha\right\rangle _{i}\left(  -1\right)  ^{i}S(k,i)x^{n-k}.
\label{25}%
\end{equation}

\end{theorem}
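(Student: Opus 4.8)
The plan is to extract the coefficients directly from the generating function (\ref{0}), rather than invoking the Fa\`{a} di Bruno formula used in Theorem 1, since in the non-degenerate setting the composite $(2-e^{t})^{-2\alpha}$ admits a clean binomial expansion. First I would establish the number formula (\ref{11}), then promote it to the polynomial formula by a Cauchy product with $e^{xt}$, and finally read off (\ref{25}) from the relation $E_{n}^{(2\alpha)}(x;-1/2)=2^{3\alpha}a_{n}^{(\alpha)}(x)$ recorded in the Introduction.

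For the numbers, I would write $2-e^{t}=1-(e^{t}-1)$ and use the generalized binomial series
\[
(2-e^{t})^{-2\alpha}=\bigl(1-(e^{t}-1)\bigr)^{-2\alpha}=\sum_{i=0}^{\infty}\binom{-2\alpha}{i}(-1)^{i}(e^{t}-1)^{i},
\]
where $\binom{-2\alpha}{i}=\langle -2\alpha\rangle_{i}/i!$. Substituting the Stirling generating function $(e^{t}-1)^{i}=i!\sum_{n\geq i}S(n,i)t^{n}/n!$ (displayed just after (\ref{12})) and using $S(n,i)=0$ for $n<i$, the factor $i!$ cancels, and interchanging the two summations yields
\[
(2-e^{t})^{-2\alpha}=\sum_{n=0}^{\infty}\Bigl(\sum_{i=0}^{n}\langle -2\alpha\rangle_{i}(-1)^{i}S(n,i)\Bigr)\frac{t^{n}}{n!}.
\]
Multiplying by $2^{\alpha}$ and comparing coefficients with (\ref{0}) at $x=0$ gives (\ref{11}).

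To obtain the polynomial formula, I would use the expansion $a_{n}^{(\alpha)}(x)=\sum_{k=0}^{n}\binom{n}{k}a_{k}^{(\alpha)}x^{n-k}$, which is nothing but the Cauchy product of the number generating function with $e^{xt}=\sum_{m}x^{m}t^{m}/m!$; inserting (\ref{11}) for $a_{k}^{(\alpha)}$ produces exactly the asserted double sum. The Apostol-Euler identity (\ref{25}) then follows at once from $E_{n}^{(2\alpha)}(x;-1/2)=2^{3\alpha}a_{n}^{(\alpha)}(x)$, the prefactor being $2^{3\alpha}\cdot 2^{\alpha}=2^{4\alpha}$.

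I do not expect a serious obstacle here; the only points requiring care are the identification $\binom{-2\alpha}{i}=\langle -2\alpha\rangle_{i}/i!$ together with the sign bookkeeping of the $(-1)^{i}$ factors, and the justification for swapping the $i$- and $n$-summations. The latter is legitimate as an identity of formal power series, or analytically for $|t|<\log 2$ where (\ref{0}) converges.
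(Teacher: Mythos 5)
Your proof is correct, but it takes a genuinely different route from the paper's. The paper computes $\lim_{t\to 0}\frac{d^{k}}{dt^{k}}(2-e^{t})^{-2\alpha}$ by applying the Fa\`{a} di Bruno formula (\ref{2}) with $f(u)=(2-u)^{-2\alpha}$ and $h(t)=e^{t}$, using the homogeneity property (\ref{5}) and the special value $B_{k,i}(1,\dots,1)=S(k,i)$ from (\ref{12}) to evaluate the Bell polynomials, and then assembles the polynomial formula with Leibniz's rule for the $n$-th derivative of the product with $e^{xt}$; the Taylor coefficients at $t=0$ are then read off from (\ref{0}). You instead expand $(2-e^{t})^{-2\alpha}=\bigl(1-(e^{t}-1)\bigr)^{-2\alpha}$ by the Newton binomial series, substitute the Stirling generating function $(e^{t}-1)^{i}/i!=\sum_{n\ge i}S(n,i)t^{n}/n!$, and finish with a Cauchy product against $e^{xt}$ — a pure generating-function argument that never touches Bell polynomials or derivatives. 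Both are sound and land on identical formulas, including the correct prefactor $2^{3\alpha}\cdot 2^{\alpha}=2^{4\alpha}$ in (\ref{25}). Your route is more elementary and arguably cleaner for this non-degenerate case, where the inner function $e^{t}$ makes the binomial expansion transparent; the paper's derivative-based machinery is chosen because it is the uniform tool of the article and carries over to the degenerate case $(1+\lambda t)^{1/\lambda}$ of Theorem 3.1, where no comparably simple series expansion is available. The points you flag (the identification $\binom{-2\alpha}{i}=\langle-2\alpha\rangle_{i}/i!$ and the interchange of summations, which is a finite rearrangement in each coefficient of $t^{n}$ since $S(n,i)=0$ for $i>n$) are indeed the only places requiring care, and you handle them correctly.
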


\begin{proof}
From (\ref{2}), (\ref{5}) and (\ref{12}), we have%
\begin{align}
&  \frac{d^{k}}{dt^{k}}\left(  2-e^{t}\right)  ^{-2\alpha}\nonumber\\
&  =%
{\displaystyle\sum\limits_{i=0}^{k}}
\left\langle -2\alpha\right\rangle _{i}\left(  2-e^{t}\right)  ^{-2\alpha
-i}B_{k,i}\left(  -e^{t},-e^{t},...,-e^{t}\right) \nonumber\\
&  =%
{\displaystyle\sum\limits_{i=0}^{k}}
\left\langle -2\alpha\right\rangle _{i}\left(  2-e^{t}\right)  ^{-2\alpha
-i}(-1)^{i}e^{ti}B_{k,i}\left(  1,1,...,1\right) \nonumber\\
&  \rightarrow%
{\displaystyle\sum\limits_{i=0}^{k}}
\left\langle -2\alpha\right\rangle _{i}(-1)^{i}S\left(  k,i\right)  ,\text{
\ \ \ \ as }t\rightarrow0. \label{29}%
\end{align}
Also, it is obvious that $\left(  e^{xt}\right)  ^{(k)}=x^{k}e^{xt}\rightarrow
x^{k},$ as $t\rightarrow0.$ So, by aid of the Leibnitz's formula for the $n$th
derivative of the product of two functions, we get
\begin{align*}
&  \lim_{t\rightarrow0}\frac{d^{n}}{dt^{n}}\left[  \frac{2^{\alpha}}{\left(
2-e^{t}\right)  ^{2\alpha}}e^{xt}\right] \\
&  =2^{\alpha}%
{\displaystyle\sum\limits_{k=0}^{n}}
\binom{n}{k}%
{\displaystyle\sum\limits_{i=0}^{k}}
\left\langle -2\alpha\right\rangle _{i}(-1)^{i}S\left(  k,i\right)  x^{n-k},
\end{align*}
namely, we have $a_{n}^{\left(  \alpha\right)  }\left(  x\right)  $ by
(\ref{0}). For $x=0,$ we immediately arrive at the identity (\ref{11}). The
equation (\ref{25}) can be deduced from the relation between the Fubini-type
polynomials $a_{n}^{\left(  \alpha\right)  }\left(  x\right)  $ and
generalized Apostol-Euler polynomials $E_{n}^{(\alpha)}(x;\gamma),$ given by
(\ref{22}).
\end{proof}

\begin{theorem}
The Fubini-type polynomials $a_{n}^{\left(  \alpha\right)  }\left(  x\right)
$ satisfy the recurrence relation
\[%
{\displaystyle\sum\limits_{k=0}^{n}}
\binom{n}{k}%
{\displaystyle\sum\limits_{i=0}^{n-k}}
\left\langle 2\alpha\right\rangle _{i}(-1)^{i}S\left(  n-k,i\right)
a_{k}^{\left(  \alpha\right)  }\left(  x\right)  =2^{\alpha}x^{n}%
\]
In particular, the Fubini-type numbers $a_{n}^{\left(  \alpha\right)  }$
provide that%
\begin{equation}%
{\displaystyle\sum\limits_{k=0}^{n}}
\binom{n}{k}%
{\displaystyle\sum\limits_{i=0}^{n-k}}
\left\langle 2\alpha\right\rangle _{i}(-1)^{i}S\left(  n-k,i\right)
a_{k}^{\left(  \alpha\right)  }=0. \label{16}%
\end{equation}
In analogy, the generalized Apostol-Euler polynomials $E_{n}^{(\alpha
)}(x;\gamma)$ possess the recurrence relation%
\begin{equation}%
{\displaystyle\sum\limits_{k=0}^{n}}
\binom{n}{k}%
{\displaystyle\sum\limits_{i=0}^{n-k}}
\left\langle 2\alpha\right\rangle _{i}(-1)^{i}S\left(  n-k,i\right)
E_{k}^{(2\alpha)}(x;-1/2)=2^{4\alpha}x^{n}. \label{26}%
\end{equation}

\end{theorem}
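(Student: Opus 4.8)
The plan is to convert the generating function (\ref{0}) into a product identity and then extract the recurrence by comparing Taylor coefficients at $t=0$. Multiplying (\ref{0}) by $(2-e^{t})^{2\alpha}$ gives
\[
2^{\alpha}e^{xt}=(2-e^{t})^{2\alpha}\sum_{k=0}^{\infty}a_{k}^{(\alpha)}(x)\frac{t^{k}}{k!}.
\]
Since $2^{\alpha}e^{xt}=2^{\alpha}\sum_{n=0}^{\infty}x^{n}t^{n}/n!$, the coefficient of $t^{n}/n!$ on the left-hand side is simply $2^{\alpha}x^{n}$, which will supply the right-hand side of the asserted recurrence.

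Next I would expand $(2-e^{t})^{2\alpha}$ in powers of $t$. This is the very same Fa\`{a} di Bruno computation carried out in the proof of the previous theorem, but now run with $f(u)=(2-u)^{2\alpha}$ in place of $(2-u)^{-2\alpha}$. Because $f^{(i)}(u)=(-1)^{i}\langle 2\alpha\rangle_{i}(2-u)^{2\alpha-i}$, and by (\ref{5}) and (\ref{12}) one has $B_{k,i}(e^{t},\dots,e^{t})=e^{ti}S(k,i)$, letting $t\to0$ yields, exactly as in (\ref{29}) but with $2\alpha$ replacing $-2\alpha$,
\[
(2-e^{t})^{2\alpha}=\sum_{m=0}^{\infty}\left(\sum_{i=0}^{m}(-1)^{i}\langle 2\alpha\rangle_{i}S(m,i)\right)\frac{t^{m}}{m!}.
\]

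I would then form the Cauchy product of this series with $\sum_{k}a_{k}^{(\alpha)}(x)t^{k}/k!$ and read off the coefficient of $t^{n}/n!$, namely $\sum_{k=0}^{n}\binom{n}{k}\bigl(\sum_{i=0}^{n-k}(-1)^{i}\langle 2\alpha\rangle_{i}S(n-k,i)\bigr)a_{k}^{(\alpha)}(x)$. Equating this to $2^{\alpha}x^{n}$ produces the stated recurrence for $a_{n}^{(\alpha)}(x)$. Putting $x=0$ and using $0^{n}=0$ for $n\geq1$ collapses the right-hand side to $0$ and gives (\ref{16}); substituting $a_{k}^{(\alpha)}(x)=2^{-3\alpha}E_{k}^{(2\alpha)}(x;-1/2)$, the relation recalled in the introduction (equivalently the $\lambda\to0$ case of (\ref{22})), and multiplying through by $2^{3\alpha}$ turns $2^{\alpha}x^{n}$ into $2^{4\alpha}x^{n}$, which is precisely (\ref{26}).

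There is no serious obstacle here; the argument is a routine coefficient comparison built on machinery already established. The points needing care are the sign and falling-factorial bookkeeping in differentiating $(2-u)^{2\alpha}$ (the exponent is now $+2\alpha$, so $\langle 2\alpha\rangle_{i}$ rather than $\langle -2\alpha\rangle_{i}$ enters), keeping the binomial weights $\binom{n}{k}$ aligned correctly in the Cauchy product, and observing that the numbers identity (\ref{16}) holds only for $n\geq1$, since at $n=0$ the left-hand side equals $a_{0}^{(\alpha)}=2^{\alpha}\neq0$.
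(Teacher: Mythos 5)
Your proposal is correct and follows essentially the same route as the paper: both multiply the generating function (\ref{0}) by $(2-e^{t})^{2\alpha}$, expand that factor via the Fa\`{a} di Bruno formula together with (\ref{5}) and (\ref{12}), and extract the coefficient of $t^{n}/n!$ (the paper phrases this as Leibniz's rule for the $n$th derivative at $t=0$, which is the same computation as your Cauchy product). Your added caveat that (\ref{16}) requires $n\geq 1$, since the left-hand side equals $a_{0}^{(\alpha)}=2^{\alpha}$ at $n=0$, is a correct refinement the paper leaves implicit.
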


\begin{proof}
Since%
\[
\left[  \left(  2-e^{t}\right)  ^{2\alpha}\right]  \left[  \frac{2^{\alpha}%
}{\left(  2-e^{t}\right)  ^{2\alpha}}e^{xt}\right]  =2^{\alpha}e^{xt},
\]
by keeping in mind the generating function of Fubini-type polynomials
(\ref{0}) and by proceeding as in the proof of (\ref{29}), differentiate $n$
times with respect to $t$ on both sides to deduce that%
\begin{align*}
&
{\displaystyle\sum\limits_{k=0}^{n}}
\binom{n}{k}\frac{\partial^{n-k}}{\partial t^{n-k}}\left[  \left(
2-e^{t}\right)  ^{2\alpha}\right]  \frac{\partial^{k}}{\partial t^{k}}\left[
\frac{2^{\alpha}}{\left(  2-e^{t}\right)  ^{2\alpha}}e^{xt}\right] \\
&  =%
{\displaystyle\sum\limits_{k=0}^{n}}
\binom{n}{k}%
{\displaystyle\sum\limits_{i=0}^{n-k}}
\left\langle 2\alpha\right\rangle _{i}\left(  2-e^{t}\right)  ^{2\alpha
-i}(-1)^{i}e^{ti}S\left(  n-k,i\right)  \frac{\partial^{k}}{\partial u^{k}%
}\left[  \frac{2^{\alpha}}{\left(  2-e^{t}\right)  ^{2\alpha}}e^{xt}\right] \\
&  \rightarrow%
{\displaystyle\sum\limits_{k=0}^{n}}
\binom{n}{k}%
{\displaystyle\sum\limits_{i=0}^{n-k}}
\left\langle 2\alpha\right\rangle _{i}(-1)^{i}S\left(  n-k,i\right)
a_{k}^{\left(  \alpha\right)  }\left(  x\right)  ,\text{ \ \ as }%
t\rightarrow0\\
&  =2^{\alpha}x^{n}.
\end{align*}
Setting $x=0$ yields the equation (\ref{16}) immediately. Formula (\ref{26})
can be derived by the same motivation stated in the proofs of our previous theorems.
\end{proof}

\begin{theorem}
The following relationship holds true:%
\begin{equation}
a_{n-2\alpha}^{\left(  \alpha\right)  }\left(  x;\lambda\right)  =\frac
{B_{n}^{\left(  2\alpha\right)  }(x;\lambda;1/2)}{2^{\alpha}\left\langle
n\right\rangle _{2\alpha}}, \label{27}%
\end{equation}
where $B_{n}^{\left(  \alpha\right)  }(x;\lambda;\gamma)$ is the degenerate
Apostol-Bernoulli polynomials of order $\alpha,$ defined by (\ref{19}).
\end{theorem}

\begin{proof}
If we put $\gamma=1/2$ and replace $\alpha$ by $2\alpha$ in (\ref{19}), we
have%
\begin{align*}%
{\displaystyle\sum\limits_{n=0}^{\infty}}
B_{n}^{\left(  2\alpha\right)  }(x;\lambda;1/2)\dfrac{t^{n}}{n!}  &  =\left(
\frac{t}{\dfrac{1}{2}\left(  1+\lambda t\right)  ^{1/\lambda}-1}\right)
^{2\alpha}\left(  1+\lambda t\right)  ^{x/\lambda}\\
&  =2^{\alpha}t^{2\alpha}%
{\displaystyle\sum\limits_{n=0}^{\infty}}
a_{n}^{\left(  \alpha\right)  }\left(  x;\lambda\right)  \frac{t^{n}}{n!}\\
&  =2^{\alpha}%
{\displaystyle\sum\limits_{n=2\alpha}^{\infty}}
a_{n-2\alpha}^{\left(  \alpha\right)  }\left(  x;\lambda\right)  \frac{t^{n}%
}{(n-2\alpha)!}\\
&  =2^{\alpha}%
{\displaystyle\sum\limits_{n=2\alpha}^{\infty}}
\left\langle n\right\rangle _{2\alpha}a_{n-2\alpha}^{\left(  \alpha\right)
}\left(  x;\lambda\right)  \frac{t^{n}}{n!},
\end{align*}
which concludes the proof.
\end{proof}

Let us continue to study degenerate Fubini-type polynomials $a_{n}^{\left(
\alpha\right)  }\left(  x;\lambda\right)  $. Firstly, from the fact%
\begin{align*}
\left(  1+\lambda t\right)  ^{x/\lambda}  &  =%
{\displaystyle\sum\limits_{n=0}^{\infty}}
\left\langle \frac{x}{\lambda}\right\rangle _{n}\lambda^{n}\frac{t^{n}}{n!}\\
&  =%
{\displaystyle\sum\limits_{n=0}^{\infty}}
\left(  x\right)  _{n,\lambda}\frac{t^{n}}{n!},
\end{align*}
where $\left(  x\right)  _{n,\lambda}=x\left(  x-\lambda\right)  ...\left(
x-\left(  n-1\right)  \lambda\right)  $ for $n>0$ with $\left(  x\right)
_{0,\lambda}=1,$ it is easily verify that%
\begin{align*}%
{\displaystyle\sum\limits_{n=0}^{\infty}}
a_{n}^{\left(  \alpha\right)  }\left(  x;\lambda\right)  \frac{t^{n}}{n!}  &
=\frac{2^{\alpha}}{\left(  2-\left(  1+\lambda t\right)  ^{1/\lambda}\right)
^{2\alpha}}\left(  1+\lambda t\right)  ^{x/\lambda}\\
&  =\left(
{\displaystyle\sum\limits_{n=0}^{\infty}}
a_{n}^{\left(  \alpha\right)  }\left(  \lambda\right)  \frac{t^{n}}%
{n!}\right)  \left(
{\displaystyle\sum\limits_{n=0}^{\infty}}
\left(  x\right)  _{n,\lambda}\frac{t^{n}}{n!}\right) \\
&  =%
{\displaystyle\sum\limits_{n=0}^{\infty}}
\left(
{\displaystyle\sum\limits_{k=0}^{n}}
\binom{n}{k}a_{k}^{\left(  \alpha\right)  }\left(  \lambda\right)  \left(
x\right)  _{n-k,\lambda}\right)  \frac{t^{n}}{n!}.
\end{align*}
Comparing the coefficients $\frac{t^{n}}{n!}$ gives the following theorem.

\begin{theorem}
For $n\geq0,$ we have%
\[
a_{n}^{\left(  \alpha\right)  }\left(  x;\lambda\right)  =%
{\displaystyle\sum\limits_{k=0}^{n}}
\binom{n}{k}a_{k}^{\left(  \alpha\right)  }\left(  \lambda\right)  \left(
x\right)  _{n-k,\lambda}.
\]

\end{theorem}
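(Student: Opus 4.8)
The plan is to exploit the multiplicative structure of the generating function (\ref{1}): the factor $\left(1+\lambda t\right)^{x/\lambda}$ carries the entire $x$-dependence, while the remaining factor $2^{\alpha}/\left(2-\left(1+\lambda t\right)^{1/\lambda}\right)^{2\alpha}$ is precisely the generating function of the degenerate Fubini-type numbers $a_{n}^{\left(\alpha\right)}\left(\lambda\right)$, obtained by setting $x=0$ in (\ref{1}). Hence the identity should fall out of a Cauchy product of two formal power series, followed by a comparison of the coefficients of $t^{n}/n!$.

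First I would expand the factor $\left(1+\lambda t\right)^{x/\lambda}$ via the generalized binomial theorem as $\sum_{n\geq0}\left\langle x/\lambda\right\rangle_{n}\lambda^{n}t^{n}/n!$. The key preparatory step is to convert this coefficient into the $\lambda$-falling-factorial form that appears in the statement: distributing one factor of $\lambda$ into each of the $n$ terms of $\left\langle x/\lambda\right\rangle_{n}=\prod_{k=0}^{n-1}\left(x/\lambda-k\right)$ gives $\left\langle x/\lambda\right\rangle_{n}\lambda^{n}=\prod_{k=0}^{n-1}\left(x-k\lambda\right)=\left(x\right)_{n,\lambda}$, so that $\left(1+\lambda t\right)^{x/\lambda}=\sum_{n\geq0}\left(x\right)_{n,\lambda}t^{n}/n!$.

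Next I would substitute both series into (\ref{1}), writing
\[
\sum_{n\geq0}a_{n}^{\left(\alpha\right)}\left(x;\lambda\right)\frac{t^{n}}{n!}=\left(\sum_{n\geq0}a_{n}^{\left(\alpha\right)}\left(\lambda\right)\frac{t^{n}}{n!}\right)\left(\sum_{n\geq0}\left(x\right)_{n,\lambda}\frac{t^{n}}{n!}\right),
\]
and form the Cauchy product on the right. The coefficient of $t^{n}/n!$ in this product is the binomial convolution $\sum_{k=0}^{n}\binom{n}{k}a_{k}^{\left(\alpha\right)}\left(\lambda\right)\left(x\right)_{n-k,\lambda}$, and equating it with the coefficient $a_{n}^{\left(\alpha\right)}\left(x;\lambda\right)$ on the left yields the claimed formula. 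I do not expect any genuine obstacle here: the argument is a routine convolution of formal power series, and the only point requiring the slightest care is the elementary reindexing $\left\langle x/\lambda\right\rangle_{n}\lambda^{n}=\left(x\right)_{n,\lambda}$ recorded above, which is exactly what reconciles the binomial expansion with the generalized falling-factorial notation used in the statement.
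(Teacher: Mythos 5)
Your proposal is correct and follows essentially the same route as the paper: the paper likewise expands $\left(1+\lambda t\right)^{x/\lambda}=\sum_{n\geq0}\left\langle x/\lambda\right\rangle_{n}\lambda^{n}t^{n}/n!=\sum_{n\geq0}\left(x\right)_{n,\lambda}t^{n}/n!$, factors the generating function (\ref{1}) as the product of the number-generating series and this expansion, and compares coefficients of $t^{n}/n!$ in the Cauchy product. No gaps.
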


Now, we observe that%
\begin{align*}
&
{\displaystyle\sum\limits_{n=0}^{\infty}}
\left(  a_{n}^{\left(  \alpha\right)  }\left(  x+1;\lambda\right)
-a_{n}^{\left(  \alpha\right)  }\left(  x;\lambda\right)  \right)  \frac
{t^{n}}{n!}\\
&  =\frac{2^{\alpha}\left(  1+\lambda t\right)  ^{x/\lambda}}{\left(
2-\left(  1+\lambda t\right)  ^{1/\lambda}\right)  ^{2\alpha}}\left(  \left(
1+\lambda t\right)  ^{1/\lambda}-1\right) \\
&  =\frac{2^{\alpha}\left(  1+\lambda t\right)  ^{x/\lambda}}{\left(
2-\left(  1+\lambda t\right)  ^{1/\lambda}\right)  ^{2\alpha-1}}\left(
-1+\frac{1}{2-\left(  1+\lambda t\right)  ^{1/\lambda}}\right) \\
&  =\frac{2^{\alpha}\left(  1+\lambda t\right)  ^{x/\lambda}}{\left(
2-\left(  1+\lambda t\right)  ^{1/\lambda}\right)  ^{2\alpha}}-\sqrt{2}%
\frac{2^{\alpha-1/2}\left(  1+\lambda t\right)  ^{x/\lambda}}{\left(
2-\left(  1+\lambda t\right)  ^{1/\lambda}\right)  ^{2\left(  \alpha
-1/2\right)  }}\\
&  =%
{\displaystyle\sum\limits_{n=0}^{\infty}}
a_{n}^{\left(  \alpha\right)  }\left(  x;\lambda\right)  \frac{t^{n}}%
{n!}-\sqrt{2}%
{\displaystyle\sum\limits_{n=0}^{\infty}}
a_{n}^{\left(  \alpha-1/2\right)  }\left(  x;\lambda\right)  \frac{t^{n}}%
{n!}\\
&  =%
{\displaystyle\sum\limits_{n=0}^{\infty}}
\left(  a_{n}^{\left(  \alpha\right)  }\left(  x;\lambda\right)  -\sqrt
{2}a_{n}^{\left(  \alpha-1/2\right)  }\left(  x;\lambda\right)  \right)
\frac{t^{n}}{n!}.
\end{align*}
Comparing the coefficients $\frac{t^{n}}{n!}$ yields the following theorem.

\begin{theorem}
\label{main1}For $n\geq0,$ we have%
\[
a_{n}^{\left(  \alpha\right)  }\left(  x+1;\lambda\right)  =2a_{n}^{\left(
\alpha\right)  }\left(  x;\lambda\right)  -\sqrt{2}a_{n}^{\left(
\alpha-1/2\right)  }\left(  x;\lambda\right)  .
\]

\end{theorem}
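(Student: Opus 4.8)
The plan is to work entirely at the level of generating functions and to read off the identity by comparing coefficients of $t^{n}/n!$. First I would form the exponential generating function of the left-hand side difference. Using the defining relation (\ref{1}) for both the shifted and unshifted arguments, and noting that $(1+\lambda t)^{(x+1)/\lambda}-(1+\lambda t)^{x/\lambda}=(1+\lambda t)^{x/\lambda}\bigl((1+\lambda t)^{1/\lambda}-1\bigr)$, I obtain
\[
{\textstyle\sum_{n=0}^{\infty}}\left(a_{n}^{(\alpha)}(x+1;\lambda)-a_{n}^{(\alpha)}(x;\lambda)\right)\frac{t^{n}}{n!}=\frac{2^{\alpha}(1+\lambda t)^{x/\lambda}}{\left(2-(1+\lambda t)^{1/\lambda}\right)^{2\alpha}}\left((1+\lambda t)^{1/\lambda}-1\right).
\]

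The key step is an algebraic rearrangement of the trailing factor $(1+\lambda t)^{1/\lambda}-1$. Abbreviating $E=(1+\lambda t)^{1/\lambda}$ for brevity, I would write $E-1=-(2-E)+1$ and pull one power of $(2-E)$ out of the denominator, which gives
\[
\frac{2^{\alpha}(1+\lambda t)^{x/\lambda}}{(2-E)^{2\alpha}}(E-1)=\frac{2^{\alpha}(1+\lambda t)^{x/\lambda}}{(2-E)^{2\alpha-1}}\left(-1+\frac{1}{2-E}\right).
\]
Distributing the bracket splits the expression into two terms: the term containing $\tfrac{1}{2-E}$ restores the exponent $2\alpha$ in the denominator and is precisely the generating function of $a_{n}^{(\alpha)}(x;\lambda)$ by (\ref{1}), while the remaining term carries the exponent $2\alpha-1$.

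The one point I would need to get exactly right — and the only genuine obstacle — is matching that remaining term to $a_{n}^{(\alpha-1/2)}(x;\lambda)$. This hinges on the two bookkeeping observations that $2\alpha-1=2(\alpha-1/2)$ in the exponent and that the numerical prefactor splits as $2^{\alpha}=\sqrt{2}\,2^{\alpha-1/2}$, so that
\[
\frac{2^{\alpha}(1+\lambda t)^{x/\lambda}}{(2-E)^{2\alpha-1}}=\sqrt{2}\,\frac{2^{\alpha-1/2}(1+\lambda t)^{x/\lambda}}{(2-E)^{2(\alpha-1/2)}}=\sqrt{2}\,{\textstyle\sum_{n=0}^{\infty}}a_{n}^{(\alpha-1/2)}(x;\lambda)\frac{t^{n}}{n!},
\]
again by (\ref{1}), now with $\alpha$ replaced by $\alpha-1/2$.

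Collecting the two pieces, the right-hand side becomes ${\textstyle\sum_{n}}\bigl(a_{n}^{(\alpha)}(x;\lambda)-\sqrt{2}\,a_{n}^{(\alpha-1/2)}(x;\lambda)\bigr)t^{n}/n!$. Finally I would equate the coefficients of $t^{n}/n!$ on both sides and transpose the term $a_{n}^{(\alpha)}(x;\lambda)$, which yields the stated recurrence $a_{n}^{(\alpha)}(x+1;\lambda)=2a_{n}^{(\alpha)}(x;\lambda)-\sqrt{2}\,a_{n}^{(\alpha-1/2)}(x;\lambda)$. Everything apart from the $(2-E)$-factoring and the accompanying $\sqrt{2}$/order-shift identification is routine coefficient comparison.
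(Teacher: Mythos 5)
Your proposal is correct and follows essentially the same route as the paper: form the generating function of the difference $a_{n}^{(\alpha)}(x+1;\lambda)-a_{n}^{(\alpha)}(x;\lambda)$, rewrite $(1+\lambda t)^{1/\lambda}-1=-\bigl(2-(1+\lambda t)^{1/\lambda}\bigr)+1$, split into two terms, and identify the lower-exponent term with $\sqrt{2}\,a_{n}^{(\alpha-1/2)}(x;\lambda)$ via $2\alpha-1=2(\alpha-1/2)$ and $2^{\alpha}=\sqrt{2}\cdot2^{\alpha-1/2}$. The coefficient comparison and the final transposition match the paper's argument exactly.
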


Now, if we differentiate both sides of (\ref{1}) with respect to $t,$ we write%
\begin{align}
&  \frac{d}{dt}\left(  \frac{2^{\alpha}\left(  1+\lambda t\right)
^{x/\lambda}}{\left(  2-\left(  1+\lambda t\right)  ^{1/\lambda}\right)
^{2\alpha}}\right) \nonumber\\
&  =2^{\alpha}\left(  \frac{x\left(  1+\lambda t\right)  ^{(x-\lambda
)/\lambda}}{\left(  2-\left(  1+\lambda t\right)  ^{1/\lambda}\right)
^{2\alpha}}+2\alpha\frac{\left(  1+\lambda t\right)  ^{(x-\lambda+1)/\lambda}%
}{\left(  2-\left(  1+\lambda t\right)  ^{1/\lambda}\right)  ^{2\alpha+1}%
}\right)  . \label{3}%
\end{align}
If we replace $x$ by $x_{1}+x_{2}+\lambda$ and evaluate the terms on both
sides of (\ref{3}), separately, then, we have%
\begin{equation}
\frac{d}{dt}\left(  \frac{2^{\alpha}\left(  1+\lambda t\right)  ^{\left(
x_{1}+x_{2}+\lambda\right)  /\lambda}}{\left(  2-\left(  1+\lambda t\right)
^{1/\lambda}\right)  ^{2\alpha}}\right)  =%
{\displaystyle\sum\limits_{n=0}^{\infty}}
a_{n+1}^{\left(  \alpha\right)  }\left(  x_{1}+x_{2}+\lambda;\lambda\right)
\frac{t^{n}}{n!}, \label{6}%
\end{equation}%
\begin{equation}
\left(  x_{1}+x_{2}+\lambda\right)  \frac{2^{\alpha}\left(  1+\lambda
t\right)  ^{(x_{1}+x_{2})/\lambda}}{\left(  2-\left(  1+\lambda t\right)
^{1/\lambda}\right)  ^{2\alpha}}=\left(  x_{1}+x_{2}+\lambda\right)
{\displaystyle\sum\limits_{n=0}^{\infty}}
a_{n}^{\left(  \alpha\right)  }\left(  x_{1}+x_{2};\lambda\right)  \frac
{t^{n}}{n!} \label{7}%
\end{equation}
and%
\begin{equation}
\sqrt{2}\alpha\frac{2^{\alpha+1/2}\left(  1+\lambda t\right)  ^{(x_{1}%
+x_{2}+1)/\lambda}}{\left(  2-\left(  1+\lambda t\right)  ^{1/\lambda}\right)
^{2\left(  \alpha+1/2\right)  }}=\sqrt{2}\alpha%
{\displaystyle\sum\limits_{n=0}^{\infty}}
a_{n}^{\left(  \alpha+1/2\right)  }\left(  x_{1}+x_{2}+1;\lambda\right)
\frac{t^{n}}{n!}. \label{8}%
\end{equation}
Substitute (\ref{6}), (\ref{7}) and (\ref{8}) in (\ref{3}) to reach the
following theorem.

\begin{theorem}
\label{main}For $n\geq0,$ the degenerate Fubini-type polynomials
$a_{n}^{\left(  \alpha\right)  }\left(  x;\lambda\right)  $ satisfy the
recurrence relation%
\begin{align}
&  a_{n+1}^{\left(  \alpha\right)  }\left(  x_{1}+x_{2}+\lambda;\lambda\right)
\nonumber\\
&  =\left(  x_{1}+x_{2}+\lambda\right)  a_{n}^{\left(  \alpha\right)  }\left(
x_{1}+x_{2};\lambda\right)  +\sqrt{2}\alpha a_{n}^{\left(  \alpha+1/2\right)
}\left(  x_{1}+x_{2}+1;\lambda\right)  . \label{18}%
\end{align}
Letting $\lambda\rightarrow0$ and taking $x_{1}+x_{2}=y$ in (\ref{18}) allow
us to derive the following formula for Fubini-type polynomials
\[
a_{n+1}^{\left(  \alpha\right)  }\left(  y\right)  =ya_{n}^{\left(
\alpha\right)  }\left(  y\right)  +\sqrt{2}\alpha a_{n}^{\left(
\alpha+1/2\right)  }\left(  y+1\right)  .
\]

\end{theorem}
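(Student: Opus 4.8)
The plan is to differentiate the defining generating function (\ref{1}) with respect to $t$ and then recognize each resulting term as a degenerate Fubini-type generating function. Writing the closed form as a product $2^{\alpha}(1+\lambda t)^{x/\lambda}\cdot(2-(1+\lambda t)^{1/\lambda})^{-2\alpha}$, I would apply the product rule together with $\frac{d}{dt}(1+\lambda t)^{x/\lambda}=x(1+\lambda t)^{(x-\lambda)/\lambda}$ and $\frac{d}{dt}(1+\lambda t)^{1/\lambda}=(1+\lambda t)^{(1-\lambda)/\lambda}$. This yields (\ref{3}), whose first term carries the factor $x$ over the same power $2\alpha$ of $2-(1+\lambda t)^{1/\lambda}$, while the second term carries $2\alpha$ and raises that power to $2\alpha+1$. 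On the series side, term-by-term differentiation merely shifts the summation index, producing $\sum_{n\ge0}a_{n+1}^{(\alpha)}(x;\lambda)\frac{t^{n}}{n!}$.

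The key device is the substitution $x\mapsto x_{1}+x_{2}+\lambda$, engineered so that the awkward exponent shifts collapse: $(x-\lambda)/\lambda$ becomes $(x_{1}+x_{2})/\lambda$ and $(x-\lambda+1)/\lambda$ becomes $(x_{1}+x_{2}+1)/\lambda$. After this substitution the first term is exactly $(x_{1}+x_{2}+\lambda)$ times the generating function of $a_{n}^{(\alpha)}(x_{1}+x_{2};\lambda)$, giving (\ref{7}), while the left-hand side becomes the generating function of $a_{n+1}^{(\alpha)}(x_{1}+x_{2}+\lambda;\lambda)$, giving (\ref{6}).

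The only subtle point, and what I expect to be the main obstacle, is rewriting the second term as a genuine Fubini-type generating function of shifted order. Since its denominator is $(2-(1+\lambda t)^{1/\lambda})^{2\alpha+1}=(2-(1+\lambda t)^{1/\lambda})^{2(\alpha+1/2)}$, I need the numerator constant to be $2^{\alpha+1/2}$ rather than $2^{\alpha}$. Extracting the missing factor via $2^{\alpha}=2^{-1/2}\,2^{\alpha+1/2}$ turns the coefficient $2\alpha\cdot 2^{\alpha}$ into $\sqrt{2}\,\alpha\cdot 2^{\alpha+1/2}$, because $2/\sqrt{2}=\sqrt{2}$; this is precisely the origin of the $\sqrt{2}$ in the statement. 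The term then matches $\sqrt{2}\,\alpha$ times the generating function of $a_{n}^{(\alpha+1/2)}(x_{1}+x_{2}+1;\lambda)$, which is (\ref{8}).

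Substituting (\ref{6}), (\ref{7}) and (\ref{8}) into (\ref{3}) and comparing the coefficients of $\frac{t^{n}}{n!}$ then yields the recurrence (\ref{18}). Finally, for the limiting formula I would let $\lambda\to0$, using $\lim_{\lambda\to0}(1+\lambda t)^{1/\lambda}=e^{t}$ so that each $a_{m}^{(\beta)}(\,\cdot\,;\lambda)$ reduces to the corresponding Fubini-type polynomial $a_{m}^{(\beta)}(\,\cdot\,)$, and set $x_{1}+x_{2}=y$, the additive $\lambda$ in the first argument vanishing in the limit; this produces $a_{n+1}^{(\alpha)}(y)=y\,a_{n}^{(\alpha)}(y)+\sqrt{2}\,\alpha\,a_{n}^{(\alpha+1/2)}(y+1)$.
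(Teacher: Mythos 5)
Your proposal is correct and follows essentially the same route as the paper: differentiate the generating function (\ref{1}), substitute $x\mapsto x_{1}+x_{2}+\lambda$ so the exponent shifts collapse, rewrite the second term as $\sqrt{2}\,\alpha$ times the order-$(\alpha+1/2)$ generating function via $2^{\alpha}=2^{-1/2}2^{\alpha+1/2}$, and compare coefficients of $\frac{t^{n}}{n!}$. The limiting step $\lambda\to0$ is also handled exactly as in the paper.
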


\begin{remark}
From the relationships (\ref{27}) and (\ref{22}), the counterpart identities
in Theorems \ref{main1} and \ref{main} can be presented for degenerate
Apostol-Bernoulli polynomials and degenerate Apostol-Euler polynomials of
order $\alpha.$
\end{remark}

\section{Conclusion}

In our recent study, we have introduced and dealt with degenerate version of
Fubini-type polynomials. Utilizing the Fa\`{a} di Bruno formula and some
properties of Bell polynomials, and generating function methods, we have
derived some new explicit formulas, closed forms and recurrence relations for
degenerate Fubini-type polynomials and numbers, and Fubini-type polynomials
and numbers, defined by K\i lar and Simsek \cite{ks}. Furthermore, by
associating the degenerate Fubini-type polynomials with degenerate
Apostol-Bernoulli polynomials and degenerate Apostol-Euler polynomials of
order $\alpha,$ we have presented analog identities for them. As a final note,
a relation involving degenerate Fubini-type polynomials and degenerate
Apostol-Genocchi polynomials of order $\alpha$, defined by \cite[Eq.
2.6]{khan} can be given and further relations can be obtained.

\end{document}